\numberwithin{equation}{section}
\setlist[enumerate,1]{label={\upshape(\roman*)}}
\theoremstyle{theorem}
\newtheorem{theorem}{Theorem}[section]
\newtheorem{lemma}[theorem]{Lemma}
\theoremstyle{definition}
\newtheorem{remark}[theorem]{Remark}
\theoremstyle{remark}
\theoremstyle{proof}
\newcommand{\R}{\mathbb{R}}
\newcommand{\I}{\mathrm{I}}
\renewcommand{\H}{\mathrm{H}}
\begin{document}

\title{Sharp stability for LSI}

\author{Emanuel Indrei}
\address{Department of Mathematics, Purdue University, 150 N. University Street, West Lafayette, IN 47907-2067, USA}

\date{\today}
\begin{abstract}
A solution is given to a problem discussed by Brigati, Dolbeault, and Simonov.
\end{abstract}
\maketitle

\section{Introduction}
The following quantitative LSI was proved in \cite{Log}: there exists a dimensionless $\kappa>0$ such that assuming $u \in H^1(e^{-\pi |x|^2}dx)$,  

\begin{equation} \label{r}
\pi \delta_*(u):=\int |\nabla u|^2 e^{-\pi |x|^2}dx-\pi \int |u|^2 \ln \Big(\frac{|u|^2}{||u||_{L^2}^2} \Big)  e^{-\pi |x|^2}dx \ge \kappa \inf_{a, c} \int |u-ce^{a \cdot x}|^2 e^{-\pi |x|^2}dx.
\end{equation}

In \cite[p. 5]{arXiv:2302} the stability problem relative to a stronger norm is stated:\\

``a stability on the Gaussian logarithmic Sobolev inequality is shown in [23], although the distance is measured only by an $L^2(\mathbb{R}^n, d\gamma)$ norm. Whether a stronger estimate can be obtained in the limiting case $p=2$, eventually under some restriction, is therefore so far an open question."\\

The authors suspected that unlike the $L^2$-stability \eqref{r}, a quantitative stability in a stronger norm may not hold for all of $H^1$. This is actually the case. The optimal condition to have the $H^1$ convergence is identified in  Theorem \ref{p5}. Moreover, there exists an explicit $H^1$ bound via a moment assumption. Also, \eqref{r} is sharp via Theorem \ref{@5y}. 
\pagebreak
\begin{theorem} \label{p5}
\noindent 1. Let $\{u_k\}$ be normalized and centered in $L^2(e^{-\pi |x|^2}dx)$ and suppose $\delta_*(u_k) \rightarrow 0$ as $k\rightarrow \infty$, 
then
$$|u_k|\to 1$$
in $H^1(e^{-\pi |x|^2}dx)$ if and only if

$$
m_2(u_k):=\int |x|^2|u_k(x)|^2e^{-\pi |x|^2}dx \rightarrow \int |x|^2e^{-\pi |x|^2}dx=m_2(1).\\
$$

\noindent 2. 
If $u$ is normalized and centered in $L^2(e^{-\pi |x|^2}dx)$ $\&$
$$
m_4(u):=\int |x|^4|u(x)|^2e^{-\pi |x|^2}dx \le \alpha,
$$
then
$$
\big|\big||u|-1\big|\big|_{H^1(e^{-\pi |x|^2}dx)} \le a_\alpha \Big(\delta_*^{\frac{1}{2}}(u)+\delta_*(u)\Big)^{\frac{1}{2}},
$$
$a_\alpha>0$.\\

\noindent 3. There are densities $\{u_k\}$ normalized and centered in $L^2(e^{-\pi |x|^2}dx)$, $\delta_*(u_k) \rightarrow 0$, and 
$$
||u_k||_{H^1(e^{-\pi |x|^2}dx)} \rightarrow \infty.
$$
\end{theorem}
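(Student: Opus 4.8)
The plan is to reduce all three parts to the behaviour of the relative entropy $\Ent_\gamma(|u|^2):=\int |u|^2\ln|u|^2\,d\gamma$, where $d\gamma:=e^{-\pi|x|^2}dx$. Since $\norm{u}_{L^2}=1$, the definition of the deficit rearranges into the exact identity
\[
\int |\nabla u|^2\,d\gamma=\pi\,\delta_*(u)+\pi\,\Ent_\gamma(|u|^2),
\]
and the diamagnetic inequality $|\nabla|u||\le|\nabla u|$ gives $\int|\nabla|u||^2 d\gamma\le \pi\delta_*(u)+\pi\,\Ent_\gamma(|u|^2)$, with equality when $u\ge 0$. The $L^2$ part of the $H^1$ norm is supplied by \eqref{r}: for $u$ normalized and centered the minimizing $(a,c)$ sits near $(0,1)$, so $\norm{\,|u|-1\,}_{L^2}^2\le\norm{u-1}_{L^2}^2\lesssim\delta_*(u)$. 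Hence in all three parts it suffices to control $\Ent_\gamma(|u|^2)$, and, writing $\mu:=|u|^2\gamma$, I will use the elementary identities $\Ent_\gamma(|u|^2)=H(\mu\mid\mathrm{Leb})+\pi m_2(u)$ and $\int|\nabla|u||^2 d\gamma=\tfrac14 I(\mu\mid\gamma)$ for the Lebesgue entropy and the Fisher information.

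For Part 1 the identity shows that, given $\delta_*(u_k)\to0$, the convergence $|u_k|\to1$ in $H^1$ is equivalent to $\Ent_\gamma(|u_k|^2)\to0$. The forward direction is immediate from Talagrand's transport inequality $\W_2^2(\mu_k,\gamma)\lesssim \Ent_\gamma(|u_k|^2)$: a vanishing entropy forces $\W_2(\mu_k,\gamma)\to0$, and since $\W_2$‑convergence is equivalent to weak convergence together with convergence of second moments, $m_2(u_k)=\int|x|^2 d\mu_k\to m_2(1)$. For the reverse direction I first rule out $\Ent_\gamma(|u_k|^2)\to\infty$: Stam's inequality bounds $H(\mu_k\mid\mathrm{Leb})\le \tfrac n2\ln\!\big(C\,I(\mu_k\mid\mathrm{Leb})\big)$, while $I(\mu_k\mid\mathrm{Leb})\lesssim I(\mu_k\mid\gamma)+m_2(u_k)=4\pi(\Ent_\gamma(|u_k|^2)+\delta_*)+O(m_2)$; feeding this into $\Ent_\gamma(|u_k|^2)=H(\mu_k\mid\mathrm{Leb})+\pi m_2(u_k)$ gives a self‑referential bound $\Ent\le \tfrac n2\ln(C(1+\Ent))+C$ that forces a uniform bound on $\Ent_\gamma(|u_k|^2)$, since $m_2$ and $\delta_*$ are bounded. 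Thus $|u_k|$ is bounded in $H^1(\gamma)$. With $\psi(s):=s\ln s-s+1\ge0$ I then prove $\int\psi(\rho_k)\,d\gamma\to0$ by a Vitali argument separating two failure modes: on $\{|x|\le R\}$ the weight is comparable to Lebesgue, so bounded Fisher information gives $\sqrt{\rho_k}$ bounded in $H^1(B_R)$, and the Euclidean Sobolev embedding yields $\rho_k$ bounded in some $L^p(B_R)$, $p>1$, which with $\rho_k\to1$ a.e. kills the near contribution (no vertical concentration); on $\{|x|>R\}$, second‑moment convergence and $\rho_k\to1$ a.e. give $|x|^2\rho_k\to|x|^2$ in $L^1$ by Scheffé, so $\int_{|x|>R}\psi(\rho_k)\,d\gamma\lesssim\int_{|x|>R}|x|^2\,d\mu_k$ is uniformly small for large $R$ (no escape of mass).

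Part 2 is the quantitative version of this reverse implication under $m_4(u)\le\alpha$. The same spatial split is now made effective: Chebyshev bounds the far region by $\int_{|x|>R}\psi(\rho)\,d\gamma\lesssim\int_{|x|>R}|x|^2 d\mu\le R^{-2}m_4(u)\le \alpha R^{-2}$, while on $\{|x|\le R\}$ the local Sobolev estimate, interpolated against $\norm{\rho-1}_{L^1(\gamma)}\lesssim\norm{u-1}_{L^2}\lesssim\delta_*^{1/2}$, produces $\int_{|x|\le R}\psi(\rho)\,d\gamma\le C(R)\,I(\mu\mid\gamma)^{\theta}\,\delta_*^{(1-\theta)/2}$ for some $\theta<1$. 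Because $I(\mu\mid\gamma)=4\pi(\delta_*+\Ent_\gamma(|u|^2))$ enters only to a power $\theta<1$, substituting into $\Ent_\gamma(|u|^2)\le(\text{near})+(\text{far})$ gives a sublinear self‑improving inequality for $\Ent_\gamma(|u|^2)$; solving it and optimizing $R$ against $\alpha$ and $\delta_*$ yields $\Ent_\gamma(|u|^2)\le a_\alpha(\delta_*^{1/2}+\delta_*)$, after which the opening identity delivers the stated $H^1$ bound upon taking square roots.

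For Part 3 I construct centered near‑optimizers with exploding entropy. With $g_v(x):=e^{2\pi v\cdot x-\pi|v|^2}$ one has $g_v\gamma=e^{-\pi|x-v|^2}dx$ and $\int g_v\,d\gamma=1$; set $\rho_k:=\tfrac12(g_{v_k}+g_{-v_k})$, $u_k:=\sqrt{\rho_k}$, with $|v_k|\to\infty$. By symmetry $u_k$ is centered and normalized, $\Ent_\gamma(|u_k|^2)\sim\pi|v_k|^2\to\infty$, and $m_2(u_k)\sim|v_k|^2\to\infty$ (consistent with Part 1), so the identity gives $\norm{u_k}_{H^1(\gamma)}^2=1+\pi\delta_*(u_k)+\pi\Ent_\gamma(|u_k|^2)\to\infty$. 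It remains to check $\delta_*(u_k)\to0$: each translate $g_{\pm v_k}$ is an exact optimizer, and as $|v_k|\to\infty$ the two bumps decouple, so $\int|\nabla u_k|^2 d\gamma$ and $\Ent_\gamma(|u_k|^2)$ equal the averages of the corresponding optimizer quantities up to cross terms that are exponentially small in $|v_k|^2$; the deficit, being their difference, then tends to $0$. \emph{The main obstacle is the quantitative near estimate of Part 2}: one must extract genuine smallness (a power of $\delta_*$) from the local higher integrability while keeping the Fisher‑information exponent $\theta$ strictly below $1$, so that the self‑improving inequality closes without circularity; the qualitative uniform integrability of $\psi(\rho_k)$ in Part 1 and the decoupling estimate for the deficit of the separating mixture in Part 3 are the remaining delicate points.
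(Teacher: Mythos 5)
Your reduction to controlling $\Ent_\gamma(|u|^2)$ via the identity $\int|\nabla u|^2e^{-\pi|x|^2}dx=\pi\delta_*(u)+\pi\Ent_\gamma(|u|^2)$ is sound, and your Talagrand argument for the direction ``$H^1$ convergence $\Rightarrow m_2$ convergence'' is a correct alternative to the paper's use of \cite[Proposition C.1]{IK18}. But Part 3 contains a genuine error: the balanced mixture $\rho_k=\tfrac12(g_{v_k}+g_{-v_k})$ does \emph{not} have vanishing deficit. Where the bumps are essentially disjoint, $\ln\rho_k\approx\ln(\tfrac12 g_{\pm v_k})$, so
\begin{equation*}
\int\rho_k\ln\rho_k\,d\gamma=\pi|v_k|^2-\ln 2+o(1),\qquad
\int|\nabla\sqrt{\rho_k}|^2\,d\gamma=\pi^2|v_k|^2+o(1),
\end{equation*}
the second because $\nabla\rho_k=\pi v_k(g_{v_k}-g_{-v_k})$ gives $|\nabla\rho_k|^2/\rho_k=2\pi^2|v_k|^2(g_{v_k}-g_{-v_k})^2/(g_{v_k}+g_{-v_k})\le 2\pi^2|v_k|^2(g_{v_k}+g_{-v_k})$ with near equality off the midplane. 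Hence $\delta_*(u_k)\to\ln 2$, not $0$: the $-\ln 2$ of mixing entropy is \emph{not} an exponentially small cross term; it survives at leading order in the entropy but is absent from the Fisher information, which is exactly why 50/50 mixtures are never near-minimizers. The construction must use escaping bumps of \emph{vanishing} mass, e.g. $\rho_k=(1-m_k)+\tfrac{m_k}{2}(g_{v_k}+g_{-v_k})$ with $m_k\to0$ and $m_k|v_k|^2\to\infty$, so the mixing entropy $O(m_k\ln(1/m_k))$ vanishes while the Fisher information $\sim 4\pi^2m_k|v_k|^2$ blows up; this vanishing-mass mechanism underlies the measures of \cite{MR4455233}, which the paper feeds into the transport estimate $\W_1(f\,d\gamma,\gamma)\le\sqrt{2\delta(f)}+\big(\int|\nabla f|^2/f\,d\gamma\big)^{1/2}$ obtained from the Brenier map.

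There are also gaps in your near/far splitting. The far-field bound $\int_{|x|>R}\psi(\rho)\,d\gamma\lesssim\int_{|x|>R}|x|^2\,d\mu$, with $\psi(s)=s\ln s-s+1$, presupposes the pointwise bound $\ln\rho(x)\lesssim|x|^2$, which you never establish: a tall thin spike of tiny mass at $|x|\sim R$ is compatible with your moment and a.e.\ hypotheses while violating the claimed domination, and vertical concentration is excluded by your Sobolev argument only on $B_R$; closing this hole requires re-invoking the Fisher information in the exterior, which you do not do. More fatally for Part 2, even granting both regional estimates, the near constant $C(R)$ inherits the Gaussian-to-Lebesgue comparability factor $e^{\pi R^2}$ and the Sobolev constants of $B_R$, so optimizing $R$ against the far term $\alpha R^{-2}$ yields only a modulus logarithmic in $1/\delta_*$, far from the claimed power $\delta_*^{1/2}$. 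The paper bypasses all of this: \cite[Theorem 1.17]{IK18} bounds $\pi\big(m_2(1)-m_2(u)\big)+\frac1\pi\int|\nabla u|^2e^{-\pi|x|^2}dx$ directly by $\sqrt{2n}\,\delta_*^{1/2}(u)+\delta_*(u)$, and under $m_4(u)\le\alpha$ Cauchy--Schwarz plus \eqref{r} gives $m_2(1)-m_2(u)\ge-c_\alpha\delta_*^{1/2}(u)$, which produces the gradient bound at the stated rate in two lines and settles both Part 2 and the forward direction of Part 1 simultaneously. Your Part 1 is salvageable: the Stam bootstrap giving a uniform entropy bound is correct, and the HWI inequality $\Ent_\gamma(|u_k|^2)\le\W_2(\mu_k,\gamma)\sqrt{\I(\mu_k\mid\gamma)}$, with $\W_2\to0$ from weak convergence plus the assumed $m_2$ convergence, closes the reverse direction cleanly; but as written, Parts 2 and 3 do not prove the theorem.
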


\begin{theorem} \label{@5y}
\eqref{r} has the optimal rate.
\end{theorem}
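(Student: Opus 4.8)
The plan is to exhibit an explicit one–parameter family $u_\varepsilon$ converging to the optimizer $u\equiv 1$ for which the deficit $\delta_*(u_\varepsilon)$ and the squared $L^2$–distance $d(u_\varepsilon)^2:=\inf_{a,c}\int|u_\varepsilon-ce^{a\cdot x}|^2\,d\gamma$ (with $d\gamma:=e^{-\pi|x|^2}\,dx$) are comparable, both of exact order $\varepsilon^2$. Since this forces $\delta_*(u_\varepsilon)/d(u_\varepsilon)^{2\theta}\to 0$ for every $\theta<1$, no inequality of the form $\delta_*\ge C\,d(\cdot)^{2\theta}$ with $\theta<1$ and $C>0$ uniform can hold; this is precisely the assertion that the power of the distance in \eqref{r} (equivalently the power $1$ on $\delta_*$) cannot be improved, i.e.\ \eqref{r} has the optimal rate.

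I would take $u_\varepsilon=1+\varepsilon\phi$, where $\phi$ is a second–order Hermite eigenfunction of the Ornstein--Uhlenbeck operator $L=-\Delta+2\pi x\cdot\nabla$ associated to $\gamma$; concretely one may use $\phi=|x|^2-\tfrac{n}{2\pi}$, which satisfies $\int\phi\,d\gamma=0$ and $L\phi=4\pi\phi$, so that $\int|\nabla\phi|^2\,d\gamma=4\pi\int\phi^2\,d\gamma$. The point of this choice is that $\phi$ is orthogonal in $L^2(\gamma)$ to the tangent space $\operatorname{span}\{1,x_1,\dots,x_n\}$ of the optimizer manifold $\{ce^{a\cdot x}\}$ at $u\equiv1$, and $4\pi$ is the first eigenvalue strictly above that tangent space, so the perturbation genuinely probes the second variation rather than moving along the optimizers.

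First I would expand the deficit. Writing $V:=\int\phi^2\,d\gamma$ and Taylor–expanding, $\int|\nabla u_\varepsilon|^2\,d\gamma=4\pi V\varepsilon^2$, while a second–order expansion of the entropy gives $\int u_\varepsilon^2\ln(u_\varepsilon^2/\|u_\varepsilon\|_{L^2}^2)\,d\gamma=2V\varepsilon^2+O(\varepsilon^3)$; hence $\pi\delta_*(u_\varepsilon)=(4\pi-2\pi)V\varepsilon^2+O(\varepsilon^3)$, that is $\delta_*(u_\varepsilon)=2V\varepsilon^2+O(\varepsilon^3)$. Next I would expand the distance. The upper bound $d(u_\varepsilon)^2\le\|u_\varepsilon-1\|_{L^2(\gamma)}^2=V\varepsilon^2$ is immediate from the choice $c=1,\,a=0$.

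The hard part, and the main obstacle, is the matching lower bound $d(u_\varepsilon)^2\ge V\varepsilon^2(1-o(1))$: one must rule out that bending along the curved manifold $\{ce^{a\cdot x}\}$ yields a gain better than lower order. I would decompose $u_\varepsilon-ce^{a\cdot x}$ into its projections onto Hermite degrees $0,1,\ge 2$. The degree-$(\ge2)$ part of $e^{a\cdot x}$ is $O(|a|^2)$, so matching the degree-$2$ profile $\varepsilon\phi$ would require $|a|\sim\varepsilon^{1/2}$, which in turn injects a degree-$1$ error $a\cdot x$ of size $\varepsilon^{1/2}\gg\varepsilon$ in a direction where $u_\varepsilon$ has no component; balancing these two effects shows the minimizer must satisfy $|a|\lesssim\varepsilon^{3/4}$, whence the degree-$(\ge2)$ correction is $O(\varepsilon^{3/2})$ and the cross term it contributes to the distance is $o(\varepsilon^2)$. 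This gives $d(u_\varepsilon)^2=V\varepsilon^2(1+o(1))$. Combining the two expansions yields $\delta_*(u_\varepsilon)/d(u_\varepsilon)^2\to 2$, a finite positive limit, proving that the rate in \eqref{r} is optimal.
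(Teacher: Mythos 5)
Your construction is conceptually the same as the paper's: the paper uses the exact Gaussian family $u_a(x)=(2a+1)^{n/4}e^{-a\pi|x|^2}$, whose linearization at $a=0$ is precisely your perturbation, since $u_a=1-a\pi\bigl(|x|^2-\tfrac{n}{2\pi}\bigr)+O(a^2)$; both families probe the radial degree-two Hermite direction, and both proofs reduce to showing that the ratio of deficit to squared distance tends to a finite positive limit (your $\delta_*/d^2\to 2$ is the paper's $\pi\delta_*/\int|u_a-1|^2e^{-\pi|x|^2}dx\to 2\pi$), which rules out any better-than-linear modulus or any exponent $\theta<1$. The execution differs in one substantive way. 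The paper exploits the reduction of Lemma \ref{e}: its family is exactly normalized and centered, so the distance is measured to the single function $1$, and the Dirichlet energy, entropy, and $\int|u_a-1|^2e^{-\pi|x|^2}dx$ are all computed in closed form (the limit $2\pi$ by l'H\^opital); the infimum over $(c,a)$ is never confronted. You instead attack \eqref{r} with the infimum intact, which forces the lower bound $d(u_\varepsilon)^2\ge V\varepsilon^2(1-o(1))$ --- your self-identified ``hard part''. Your sketch of that step (Hermite-degree decomposition, degree-one cost versus degree-two gain) is sound and completable, but as written it covers only the regime $c\approx 1$, $|a|$ small: a full proof must also dispose of competitors with $ce^{|a|^2/(4\pi)}$ far from $1$ (via the degree-zero projection) and with $|a|$ of order one or larger (where, once $ce^{|a|^2/(4\pi)}$ is pinned near $1$, the degree-one term alone is $\gg\varepsilon^2$); the assertion that the minimizer satisfies $|a|\lesssim\varepsilon^{3/4}$ is the output of such a case analysis, not an a priori fact. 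Similarly, the entropy expansion $\int u_\varepsilon^2\ln(u_\varepsilon^2/\|u_\varepsilon\|^2)\,e^{-\pi|x|^2}dx=2V\varepsilon^2+O(\varepsilon^3)$ deserves a sentence of justification since $\phi$ is unbounded (split at $|\varepsilon\phi|=\tfrac12$; the outer region has exponentially small Gaussian measure). What each route buys: the paper's is shorter, fully explicit, and free of remainder estimates, but it addresses the distance-to-$1$ form of the inequality furnished by Lemma \ref{e}; yours, once the case analysis is written out, treats the infimum formulation of \eqref{r} directly and makes the structural reason for optimality transparent --- the perturbation direction is orthogonal to the tangent space of the optimizer manifold, so the deficit sees exactly the spectral gap $4\pi-2\pi$.
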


A simple version of \eqref{r} appears in the next lemma. Note that thanks to this reduction, one may without loss of generality assume the functions to be centered and normalized in Theorem \ref{p5}.

\begin{lemma} \label{e}
\eqref{r}
is equivalent to
$$
\int |\nabla w|^2 e^{-\pi |x|^2}dx-\pi \int |w|^2 \ln |w|^2   e^{-\pi |x|^2}dx \ge \kappa  \int \big|w-1\big|^2 e^{-\pi |x|^2}dx
$$
in the space of non-negative functions which satisfy
$$
||w||_{L^2(e^{-\pi |x|^2}dx)}=1
$$
$$
\int x |w|^2e^{-\pi |x|^2}dx=0.
$$
\end{lemma}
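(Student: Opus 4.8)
The plan is to pin down the full invariance group of \eqref{r} and use it to normalize, center, and render non-negative an arbitrary competitor, thereby matching the two formulations. Write $d\gamma=e^{-\pi|x|^2}\,dx$ and set
$$
G(u):=\int|\nabla u|^2\,d\gamma-\pi\int|u|^2\ln\frac{|u|^2}{\norm{u}_{L^2}^2}\,d\gamma,
\qquad
D(u):=\inf_{a,c}\int|u-ce^{a\cdot x}|^2\,d\gamma,
$$
so that \eqref{r} reads exactly $G(u)\ge\kappa D(u)$. First I would record the two elementary homogeneities $G(\lambda u)=\lambda^2G(u)$ and $D(\lambda u)=\lambda^2D(u)$ (the minimizing $c$ rescales with $\lambda$); choosing $\lambda=\norm{u}_{L^2}^{-1}$ lets me assume $\norm{w}_{L^2}=1$, which is precisely what removes the normalizing denominator inside the logarithm.

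The decisive ingredient is the Cameron--Martin shift $S_t$, defined by $(S_tu)(y):=u(y+t)\,e^{-\pi t\cdot y-\pi|t|^2/2}$. A single change of variables $x=y+t$ shows that $S_t$ is a unitary operator on $L^2(\gamma)$ and that it permutes the extremal family, $S_t(ce^{a\cdot x})=c'e^{(a-\pi t)\cdot y}$; consequently $D(S_tu)=D(u)$. After one integration by parts in the gradient term (using $\nabla e^{-\pi|x|^2}=-2\pi x\,e^{-\pi|x|^2}$) the cross terms cancel and one finds $G(S_tu)=G(u)$ as well. Taking $t=\int x|u|^2\,d\gamma$, the barycenter of the density $|u|^2\,d\gamma$, produces $\int y\,|S_tu|^2\,d\gamma=0$, i.e.\ the centering condition, while keeping the $L^2(\gamma)$-norm equal to $1$. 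This is the reduction asserted in the statement: both invariants $G$ and $D$ are untouched, so \eqref{r} for $u$ holds if and only if it holds for the centered, normalized $w=S_tu$.

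It then remains to replace the infimum $D(w)$ by the explicit $\int|w-1|^2\,d\gamma$. One inequality is free: since the constant $1$ lies in the family $\{ce^{a\cdot x}\}$, we always have $D(w)\le\int|w-1|^2\,d\gamma$, so the displayed inequality is formally the stronger of the two and immediately yields \eqref{r}. For the comparison in the opposite sense I would write $D(w)=1-\sup_a|\brk{w,e^{a\cdot x}}|^2e^{-|a|^2/\pi}$ and analyze the smooth map $a\mapsto|\brk{w,e^{a\cdot x}}|^2e^{-|a|^2/\pi}$: the normalization $\norm{w}_{L^2}=1$ together with the centering $\int x|w|^2\,d\gamma=0$ pin the value and first variation at $a=0$, so that the maximizer remains at the trivial parameter to leading order and $D(w)$ stays comparable to $\int|w-1|^2\,d\gamma$. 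Finally, to discharge the non-negativity hypothesis I would pass from $u$ to $|u|$ via the diamagnetic bound $|\nabla|u||\le|\nabla u|$, which gives $G(|u|)\le G(u)$, together with $|\brk{u,e^{a\cdot x}}|\le\brk{|u|,e^{a\cdot x}}$ (valid because $e^{a\cdot x}>0$), which compares the two distances.

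I expect the genuine obstacle to sit in this last stage. The identification of $D(w)$ with $\int|w-1|^2\,d\gamma$ is transparent only near the extremal $w\equiv1$, and away from it one must verify that the centering truly keeps the optimal exponential parameter trivial rather than allowing a competing shift to lower the distance; at the same time the passage to $|u|$ lowers both $G$ and $D$, so closing the estimate for complex or sign-changing $u$ hinges on controlling the phase of $u$ by the deficit. I anticipate that the centering hypothesis $\int x|w|^2\,d\gamma=0$ is exactly the condition that reconciles these two competing reductions.
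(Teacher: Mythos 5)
Your first stage coincides with the paper's own proof, just phrased invariantly. The paper's substitution
$$
w(x)=\frac{1}{\|u\|}\,u\Big(x+\frac{\alpha}{\|u\|^{2}}\Big)\,e^{-\frac{\pi}{\|u\|^{2}}\big(\alpha\cdot x+\frac{|\alpha|^{2}}{2\|u\|^{2}}\big)},
\qquad
\alpha=\int x|u|^{2}e^{-\pi|x|^{2}}dx,
$$
is exactly $\frac{1}{\|u\|}S_{t}u$ with $t=\alpha/\|u\|^{2}$ in your notation, and the facts you derive from unitarity and from $S_{t}$ permuting $\{ce^{a\cdot x}\}$ (the shifted function is normalized and centered, the deficit is unchanged, and its distance to $1$ equals $\|u\|^{-2}$ times the distance of $u$ to a specific exponential, hence dominates $\|u\|^{-2}D(u)$) are precisely the paper's displayed computations. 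So the implication ``restricted inequality $\Rightarrow$ \eqref{r}'', which is the direction the paper proves in detail, is sound in your write-up. (One small inaccuracy: neither the gradient term nor the entropy term is invariant under $S_{t}$ by itself; each picks up a correction proportional to $|t|^{2}$, and these cancel only in the difference, which is what the paper's computation verifies.)

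The genuine gap is in your converse step, and your own suspicion about where the obstacle lies is correct. Writing $d\gamma=e^{-\pi|x|^{2}}dx$ and $F(a)=\langle w,e^{a\cdot x}\rangle^{2}e^{-|a|^{2}/\pi}$, one has
$$
\nabla F(0)=2\Big(\int w\,d\gamma\Big)\Big(\int x\,w\,d\gamma\Big),
$$
which involves the barycenter of $w\,d\gamma$, whereas the hypothesis $\int x|w|^{2}d\gamma=0$ controls the barycenter of $|w|^{2}d\gamma$; for non-negative $w$ these are genuinely different conditions (balance a tall narrow bump against a low wide one), so $a=0$ is in general not a critical point of $F$ and the ``pinned first variation'' claim fails. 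Worse, no uniform bound $D(w)\ge c_{0}\int|w-1|^{2}d\gamma$ holds on the normalized, centered, non-negative class: fix $a\neq0$, take $v=e^{-|a|^{2}/(2\pi)}e^{a\cdot x}$ (normalized, with $\int x|v|^{2}d\gamma=a/\pi$), and add a non-negative bump supported near $-Ra/|a|$ carrying $L^{2}(d\gamma)$-mass $\epsilon$ with $\epsilon^{2}\simeq|a|/(\pi R)$; after renormalizing, the resulting $w$ is admissible and within $O(\epsilon)$ of the family $\{ce^{a\cdot x}\}$, so $D(w)\to0$ as $R\to\infty$, while $\int|w-1|^{2}d\gamma\to 2-2e^{-|a|^{2}/(4\pi)}>0$. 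This does not contradict the lemma---the far bump forces the gradient term, hence the deficit, to blow up---but it shows that the converse direction cannot be obtained by comparing the two distances alone, which is what your plan does: any proof must invoke the deficit. The same one-sidedness affects your reduction to $|u|$, since $|\langle u,e^{a\cdot x}\rangle|\le\langle|u|,e^{a\cdot x}\rangle$ gives $D(|u|)\le D(u)$, the wrong direction for recovering \eqref{r} for sign-changing $u$. For comparison, the paper handles this converse direction only by the qualitative observation that $w\equiv1$ is the unique normalized, centered, non-negative member of $\{ce^{a\cdot x}\}$; it contains no quantitative comparison of $D(w)$ with $\int|w-1|^{2}d\gamma$ of the kind your outline rightly identifies as the crux.
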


In particular, a completely equivalent version of \eqref{r} with a moment assumption and modulus $\omega$ was already proven in \cite{IK18} utilizing a combination of optimal transport theory and Fourier analysis. Observe also that the non-negativity assumption appeared in Carlen's proof of the equality cases \cite{MR1132315}.
Suppose without loss of generality that
$$
||u||_{L^2(e^{-\pi |x|^2}dx)}=1
$$
$$
\int x |u|^2e^{-\pi |x|^2}dx=0.
$$
Now set 
$dm=2^{\frac{n}{2}} e^{-2\pi |x|^2}dx$,
$d\gamma=(2\pi)^{-\frac{n}{2}}e^{-\frac{|x|^{2}}{2}}dx$,
$$
w(x)=u(\sqrt{2}x)
$$
$$f(x)=|u|^2(\frac{x}{\sqrt{2\pi}})$$
\& observe

$$
\int |w|^2 dm=\int |u|^2e^{-\pi|x|^2}dx=\int f d\gamma
$$

$$
\int x f d\gamma = 0 = \int x |w|^2 dm.
$$
Suppose 

$$
\int |x|^2|u|^2 e^{-\pi|x|^2}dx\le M_\alpha,
$$
$M_\alpha \ge \frac{n}{2\pi}$. Note that $|\nabla |w||=|\nabla w|$ a.e., therefore assume $w \ge 0$.
An application of \cite[Corollary 1.21]{IK18} then implies that there exists a modulus $\omega$ such that
$$
\int |w-1|^2dm \le a \omega(\delta_c(w)) =a\omega(\delta_*(u))
$$
where $a=a(M_\alpha)>0$,
$$
\delta_c(w):=\frac{1}{2\pi} \int |\nabla w|^2 dm -\int |w|^2\ln|w|^2 dm.
$$
Thanks to 

$$
\int |w-1|^2dm  = \int |u-1|^2e^{-\pi|x|^2}dx,
$$

$$
\omega \Big(\int |\nabla u|^2 e^{-\pi |x|^2}dx-\pi \int |u|^2 \ln |u|^2   e^{-\pi |x|^2}dx\Big) \ge \frac{1}{\bar{a}}  \int |u-1|^2 e^{-\pi |x|^2}dx,
$$
$\bar{a}>0$. Moreover if $n=1$,
$$
\delta(f):=\frac{1}{2}\I(f)-\H(f)=\frac{1}{2}\int \frac{|\nabla f|^2}{f} d\gamma-\int f \ln f d\gamma ,
$$
\cite[Theorem 1.1]{IK18} yields
$$
\int |f-1|d\gamma \le \overline{a}_1 \delta^{\frac{1}{4}}(f),
$$
with $\overline{a}_1=\overline{a}_1(M_\alpha)>0$. Therefore assuming $u \ge 0$, 

\begin{align*}
\int |u-1|^2e^{-\pi|x|^2}dx &=\int |\sqrt{f}-1|^2 d\gamma\\
& \le \int |f-1|d\gamma\\
& \le \overline{a}_1\delta^{\frac{1}{4}}(f) = \overline{a}_1\delta_*^{\frac{1}{4}}(u).
\end{align*}
In addition, higher dimensional quantitative inequalities that also included an explicit modulus appeared in \cite{MR3271181, IK18, MR3567822}.

The optimal inequality via Theorem \ref{@5y} is
$$
||u-1||_{L^2(e^{-\pi |x|^2}dx)} \lesssim \delta_*^{\frac{1}{2}}(u),
$$
and the more general stability
\begin{equation} \label{v}
||\nabla u||_{L^2(e^{-\pi |x|^2}dx)} \le a_1 \delta_*^{\frac{1}{2}}(u)
\end{equation}
with the sharp exponent $\frac{1}{2}$ was proved in \cite{MR3271181} for probability measures which are absolutely continuous with respect to the Gaussian measure $d\gamma$ and the density $f(x)=|u|^2(\frac{x}{\sqrt{2\pi}})$ satisfies a $\log$-$C^{1,1}$ assumption. This was achieved via optimal transport theory \cite[Theorem 1.1, Remark 4.3]{MR3271181}: with the assumptions, there exists $\alpha \in (0,\frac{1}{2})$ such that
$$
\int f \ln f d\gamma \le \alpha \int \frac{|\nabla f|^2}{f} d\gamma.
$$
Hence
$$
\delta(f)=\frac{1}{2}\int \frac{|\nabla f|^2}{f} d\gamma-\int f \ln f d\gamma \ge (\frac{1}{2}-\alpha)\int \frac{|\nabla f|^2}{f} d\gamma,
$$
thus proving \eqref{v} with a simple change of variables. A surprising Ornstein-Uhlenbeck semigroup proof enables the explicit calculation of the sharp $a_1$ with a Poincar\'e assumption on $fd\gamma$, $f: \mathbb{R}^n \rightarrow \mathbb{R}_+$ \cite{MR3567822}. Observe that the Poincar\'e assumption implies the $\log$-$C^{1,1}$ assumption where one of the inequalities for the $\log$-$C^{1,1}$ assumption is precluded. In particular, it is one of the rare inequalities that highlights the sharp exponent and constant of proportionality. However, the $4^{th}$ moment assumption in Theorem \ref{p5}  includes the Poincar\'e assumption and approximates the optimal moment assumption. There exists a sequence $\{u_k\}$ where

$$\delta_*(u_k) \rightarrow 0$$

$$
\int |x|^2|u_k(x)|^2e^{-\pi |x|^2}dx \rightarrow \infty.
$$

A more general stability inequality for probability measures which are absolutely continuous with respect to the Gaussian measure was obtained with a combination of a Wasserstein metric and entropy. The techniques in \cite{MR3271181, MR3567822, MR3666798, IK18} involve optimal transport, semigroup theory, Fourier analysis, and probability. The recent proof of \eqref{r} in \cite{Log} is a fundamental achievement. One interesting feature is the lack of additional assumptions for \eqref{r} via the Bianchi-Egnell method. The first metric-stability result for LSI was obtained in \cite{MR3271181}.

Since the logarithmic Sobolev inequality has appeared in different fields, e.g. optimal transport theory, probability, statistical mechanics, quantum field theory, Riemannian geometry, thermodynamics, and information theory, there are many proofs and a large collection of articles recently investigated various stability formulations of similar inequalities: see for instance \cite{MR1132315, MR3271181, MR3567822, MR3666798, MR4455233, MR4475270, arXiv.2211, MR4368350, ARXIV.2201., MR4529870, /blms.12723, 3758731, I20, crysta ,MR3487241, 3269872, MR4116725, MR4079808, 2007.0367, 8550/ARXI}.

\section{Proofs}

\begin{proof}[Proof of Lemma \ref{e}]
Assume 
$$
||u||_{L^2(e^{-\pi |x|^2}dx)}=1
$$
$$
\int x |u|^2e^{-\pi |x|^2}dx=0;
$$
set 
$$
u=ce^{a \cdot x};
$$
then note that $a=0$, $c=1$ (via $u \ge 0$). In particular, $u=1$ is the only normalized \& centered minimizer.
Also, assuming the analog

\begin{equation} \label{salje}
\int |\nabla w|^2 e^{-\pi |x|^2}dx-\pi \int |w|^2 \ln |w|^2   e^{-\pi |x|^2}dx \ge \kappa  \int |w-1|^2 e^{-\pi |x|^2}dx,
\end{equation}
if $w\ge 0$,
$$
||w||_{L^2(e^{-\pi |x|^2}dx)}=1,
$$
$$
\int x |w|^2e^{-\pi |x|^2}dx=0,
$$
one also obtains \eqref{r}: 
let 
$$
||u||=||u||_{L^2(e^{-\pi |x|^2}dx)}
$$
$$
\alpha:=\int x|u|^2e^{-\pi |x|^2}dx
$$

$$
w(x):=\frac{u(x+\frac{\alpha}{||u||^2})e^{-\frac{\pi}{||u||^2}(\alpha \cdot x+\frac{|\alpha|^2}{2||u||^2})}}{||u||}
$$

\& observe

$$
||w||= ||w||_{L^2(e^{-\pi |x|^2}dx)}=1
$$

$$
\int x |w|^2e^{-\pi |x|^2}dx=0
$$

\begin{align}\notag
\int |w-1|^2 e^{-\pi |x|^2}dx&=\frac{1}{||u||^2}\int \Big|u(x+\frac{\alpha}{||u||^2})e^{-(\frac{\pi \alpha}{||u||^2} \cdot x+\frac{\pi|\alpha|^2}{2||u||^4})}-||u||\Big|^2 e^{-\pi |x|^2}dx\\\notag
&= \frac{1}{||u||^2} \int \Big|u(x+\frac{\alpha}{||u||^2})-e^{(\frac{\pi \alpha}{||u||^2} \cdot x+\frac{\pi|\alpha|^2}{2||u||^4}+\ln ||u|||)}\Big|^2 e^{-2(\frac{\pi \alpha}{||u||^2} \cdot x+\frac{\pi|\alpha|^2}{2||u||^4})}e^{-\pi |x|^2}dx\\\notag
&=\frac{1}{||u||^2}\int\Big |u(x+\frac{\alpha}{||u||^2})-e^{(\frac{\pi \alpha}{||u||^2} \cdot x+\frac{\pi|\alpha|^2}{2||u||^4}+\ln ||u||)}\Big|^2 e^{-\pi \big|x+\frac{\alpha}{||u||^2} \big|^2}dx\\ \label{sdjlsajw}
&=\frac{1}{||u||^2}\int\Big |u(x)-e^{(\frac{\pi \alpha}{||u||^2} \cdot x-\frac{\pi |\alpha|^2}{2||u||^4}+\ln ||u||)}\Big|^2 e^{-\pi |x|^2}dx.
\end{align}

\begin{align}\label{sdjlsaj}
&\int |\nabla w|^2 e^{-\pi |x|^2}dx\\\notag
&=\frac{1}{||u||^2} \int \Big|\nabla u(x+\frac{\alpha}{||u||^2})e^{-\frac{\pi}{||u||^2}(\alpha \cdot x+\frac{|\alpha|^2}{2||u||^2})}-\frac{\pi \alpha}{||u||^2} u(x+\frac{\alpha}{||u||^2})e^{-\frac{\pi}{||u||^2}(\alpha \cdot x+\frac{|\alpha|^2}{2||u||^2})}  \Big|^2 e^{-\pi |x|^2}dx\\\notag
&=\frac{1}{||u||^2} \Big(\int\Big |\nabla u(x+\frac{\alpha}{||u||^2})\Big|^2 e^{-\pi \big| x+  \frac{\alpha}{||u||^2}\big|^2} dx-\Big \langle\frac{2\pi \alpha}{||u||^2}, \int \nabla u(x+\frac{\alpha}{||u||^2})u(x+\frac{\alpha}{||u||^2})e^{-\pi \big| x+\frac{\alpha}{||u||^2} \big|^2}dx \Big \rangle\\\notag
&+\frac{\pi^2 |\alpha|^2}{||u||^4}\int\Big | u(x+\frac{\alpha}{||u||^2})\Big|^2 e^{-\pi \big| x+\frac{\alpha}{||u||^2}\big|^2} dx \Big)\\\notag
&=\frac{1}{||u||^2} \Big(\int |\nabla u(x)|^2 e^{-\pi | x|^2} dx-\Big \langle\frac{2\pi \alpha}{||u||^2}, \int \nabla u(x)u(x)e^{-\pi | x|^2}dx \Big \rangle+\frac{\pi^2 |\alpha|^2}{||u||^4}\int | u(x)|^2 e^{-\pi | x|^2} dx \Big).\\\notag
\end{align}
Moreover,
$$
\int \nabla u(x)u(x)e^{-\pi | x|^2}dx = -\int \nabla u(x)u(x)e^{-\pi | x|^2}dx +2 \pi\int x|u(x)|^2 e^{-\pi | x|^2}dx
$$
yields
$$
\int \nabla u(x)u(x)e^{-\pi | x|^2}dx=\pi\int x|u(x)|^2 e^{-\pi | x|^2}dx=\pi \alpha.
$$
Observe
$$
\Big \langle\frac{2\pi \alpha}{||u||^2}, \int \nabla u(x)u(x)e^{-\pi | x|^2}dx \Big \rangle= \frac{2\pi^2 |\alpha|^2}{||u||^2},
$$
therefore \eqref{sdjlsaj} implies
\begin{align} \notag
&\frac{1}{||u||^2} \Big(\int |\nabla u(x)|^2 e^{-\pi | x|^2} dx-\Big \langle\frac{2\pi \alpha}{||u||^2}, \int \nabla u(x)u(x)e^{-\pi | x|^2}dx \Big \rangle+\frac{\pi^2 |\alpha|^2}{||u||^4}\int | u(x)|^2 e^{-\pi | x|^2} dx \Big)\\ \notag
&=\frac{1}{||u||^2} \Big(\int |\nabla u(x)|^2 e^{-\pi | x|^2} dx-\frac{\pi^2 |\alpha|^2}{||u||^2} \Big)=\int |\nabla w|^2 e^{-\pi |x|^2}dx.\\ \label{sljf}
\end{align}
Analogously,
\begin{align*}
&\int |w|^2 \ln |w|^2   e^{-\pi |x|^2}dx=\frac{1}{||u||^2} \Big( \int\Big |u(x+\frac{\alpha}{||u||^2})\Big|^2\ln \Big(\frac{|u(x+\frac{\alpha}{||u||^2})|^2}{||u||^2}\Big)e^{-\pi \big| x+  \frac{\alpha}{||u||^2}\big|^2} dx\\
&-\Big \langle\frac{2\pi \alpha}{||u||^2}, \int x \Big| u(x+\frac{\alpha}{||u||^2})\Big|^2 e^{-\pi \big| x+\frac{\alpha}{||u||^2}\big|^2} dx\Big \rangle-\frac{\pi |\alpha|^2}{||u||^2}\Big);
\end{align*}

$$
 \int x\Big | u(x+\frac{\alpha}{||u||^2})\Big|^2 e^{-\pi \big| x+\frac{\alpha}{||u||^2}\big|^2} dx=\int \Big(x-\frac{\alpha}{||u||^2}\Big)|u(x)|^2 e^{-\pi | x|^2} dx=0;
$$
thus
\begin{equation} \label{lsjl}
\int |w|^2 \ln |w|^2   e^{-\pi |x|^2}dx =\frac{1}{||u||^2} \Big( \int |u(x)|^2\ln\Big(\frac{|u(x)|^2}{||u||^2}\Big)e^{-\pi | x|^2} dx-\frac{\pi |\alpha|^2}{||u||^2}\Big).
\end{equation}
Observe that \eqref{lsjl} and \eqref{sljf} imply
\begin{equation} \label{aljk}
\int |\nabla w|^2 e^{-\pi |x|^2}dx-\pi \int |w|^2 \ln |w|^2   e^{-\pi |x|^2}dx = \frac{1}{||u||^2}\Big( \int |\nabla u|^2 e^{-\pi |x|^2}dx-\pi \int |u|^2 \ln \Big(\frac{|u|^2}{||u||^2}\Big)   e^{-\pi |x|^2}dx \Big)
\end{equation}
and therefore \eqref{aljk} and \eqref{sdjlsajw} combine with \eqref{salje}:

$$
\int |\nabla u|^2 e^{-\pi |x|^2}dx-\pi \int |u|^2 \ln \Big(\frac{|u|^2}{||u||_{L^2}^2} \Big)  e^{-\pi |x|^2}dx \ge \kappa \inf_{a, c} \int |u-ce^{a \cdot x}|^2 e^{-\pi |x|^2}dx.
$$
\end{proof}

\begin{proof}[Proof of Theorem \ref{p5}]
\noindent 1. Observe that with $dm=2^{\frac{n}{2}} e^{-2\pi |x|^2}dx$ and $f \in L^2(dm)$ normalized, 
setting 
$$
f=u(\sqrt{2}x),
$$
$$
\int |f|^2 dm=\int |u|^2 e^{-\pi |x|^2}dx=1
$$
\&
$$
\frac{1}{2\pi} \int |\nabla f|^2 dm -\int |f|^2\ln|f|^2 dm=\frac{1}{\pi}\int |\nabla u|^2 e^{-\pi |x|^2}dx-\int |u|^2 \ln |u|^2   e^{-\pi |x|^2}dx.
$$
Thus \cite[Theorem 1.17]{IK18} implies
\begin{align} \label{al;sj}
&\pi \int |x|^2e^{-\pi |x|^2}dx-\pi \int |x|^2|u(x)|^2e^{-\pi |x|^2}dx +\frac{1}{\pi} \int |\nabla u(x)|^2 e^{-\pi |x|^2}dx \\ \notag
&\le \sqrt{2n} \Big(\frac{1}{\pi}\int |\nabla u|^2 e^{-\pi |x|^2}dx-\int |u|^2 \ln |u|^2   e^{-\pi |x|^2}dx \Big)^{\frac{1}{2}}\\ \notag
&+\Big( \frac{1}{\pi}\int |\nabla u|^2 e^{-\pi |x|^2}dx-\int |u|^2 \ln |u|^2   e^{-\pi |x|^2}dx\Big).\\\notag
\end{align}
Note that supposing
$$
\Big( \int |x|^2e^{-\pi |x|^2}dx- \int |x|^2|u_k(x)|^2e^{-\pi |x|^2}dx\Big) \rightarrow 0,
$$
\& $\delta_*(u_k) \rightarrow 0$,
one then obtains thanks to \eqref{al;sj} that
$$
\int |\nabla u_k(x)|^2 e^{-\pi |x|^2}dx \rightarrow 0
$$
and this implies $H^1(e^{-\pi |x|^2}dx)$ convergence.
Conversely, assuming 
$\sqrt{f}\in W^{1,2}(\R^{n},d\gamma)$ where $d\gamma=(2\pi)^{-\frac{n}{2}}e^{-\frac{|x|^{2}}{2}}dx$
denotes the Gaussian measure, the end of the proof of \cite[Proposition C.1]{IK18} implies
$$
	\frac{1}{2}\int \frac{|\nabla f|^{2}}{f}d\gamma-\int f\ln f d\gamma
	\geq \frac{1}{4n}\Big(2\int f\ln f d\gamma+(m_{2}(\gamma)-m_{2}(fd\gamma))\Big)^{2}.
$$
Thus set
$$
f_k=|u_k|^2(\frac{x}{\sqrt{2\pi}}).
$$
Then
$$
\int f_k e^{-\frac{|x|^{2}}{2}}(2\pi)^{-\frac{n}{2}}dx=     \int |u_k|^2 e^{-\pi |x|^2}dx= 1,
$$
$$
\frac{1}{2}\int \frac{|\nabla f_k|^{2}}{f_k}d\gamma-\int f_k\ln f_k d\gamma= \frac{1}{\pi}\int |\nabla u_k|^2 e^{-\pi |x|^2}dx-\int |u_k|^2 \ln |u_k|^2   e^{-\pi |x|^2}dx.
$$
In particular
\begin{align} \label{alw}
&\frac{1}{4n}\Big(2\int |u_k|^2 \ln |u_k|^2   e^{-\pi |x|^2}dx+
2\pi \int |x|^2 e^{-\pi |x|^2}dx-2\pi \int |x|^2|u_k|^2 e^{-\pi |x|^2}dx\Big)^2\\ \notag
&\le \frac{1}{\pi}\int |\nabla u_k|^2 e^{-\pi |x|^2}dx-\int |u_k|^2 \ln |u_k|^2   e^{-\pi |x|^2}dx . 
\end{align}
Observe via $H^1(e^{-\pi |x|^2}dx)$ convergence and the LSI that

$$
\int |u_k|^2 \ln |u_k|^2   e^{-\pi |x|^2}dx \rightarrow 0,
$$
thus thanks to
$$
\delta_*(u_k)=\Big(\frac{1}{\pi}\int |\nabla u_k|^2 e^{-\pi |x|^2}dx-\int |u_k|^2 \ln |u_k|^2   e^{-\pi |x|^2}dx\Big) \rightarrow 0,
$$
\eqref{alw} implies
$$
\Big(\int |x|^2 e^{-\pi |x|^2}dx- \int |x|^2|u_k|^2 e^{-\pi |x|^2}dx\Big)\rightarrow 0.\\
$$

\noindent 2.
$$
 \int |x|^4|u(x)|^2e^{-\pi |x|^2}dx\le A
$$
implies
\begin{align*}
&\Big|\pi \int |x|^2e^{-\pi |x|^2}dx-\pi \int |x|^2|u(x)|^2e^{-\pi |x|^2}dx\Big|=\pi \Big| \int |x|^2[1-|u|][1+|u|]e^{-\pi |x|^2}dx \Big| \\
& \le\pi \Big[ \int |x|^4|1+|u||^2  e^{-\pi |x|^2}dx\Big]^{1/2}\Big[   \int   |1-|u||^2e^{-\pi |x|^2}dx \Big]^{1/2} \\
& \le \pi \Big[ 2(A+\int |x|^4e^{-\pi |x|^2}dx)\Big]^{1/2}\Big[   \int   |1-|u||^2e^{-\pi |x|^2}dx \Big]^{1/2}.\\
\end{align*}
In particular \cite[Theorem 1.17]{IK18}, Lemma \ref{e} ($|u| \ge 0$, $|\nabla |u||=|\nabla u|$ a.e.), and \cite[Theorem 2]{Log} imply

\begin{align*}
&\frac{1}{\pi} \int |\nabla u(x)|^2 e^{-\pi |x|^2}dx \\
&\le \Big|\pi \int |x|^2e^{-\pi |x|^2}dx-\pi \int |x|^2|u(x)|^2e^{-\pi |x|^2}dx\Big|+\sqrt{2n} \Big(\frac{1}{\pi}\int |\nabla u|^2 e^{-\pi |x|^2}dx-\int |u|^2 \ln |u|^2   e^{-\pi |x|^2}dx \Big)^{\frac{1}{2}}\\
&+\Big( \frac{1}{\pi}\int |\nabla u|^2 e^{-\pi |x|^2}dx-\int |u|^2 \ln |u|^2   e^{-\pi |x|^2}dx\Big)\\
&\le  \pi \Big[ 2(A+\int |x|^4e^{-\pi |x|^2}dx)\Big]^{1/2}\Big[   \int   |1-|u||^2e^{-\pi |x|^2}dx \Big]^{1/2}+\\
&\sqrt{2n} \Big(\frac{1}{\pi}\int |\nabla u|^2 e^{-\pi |x|^2}dx-\int |u|^2 \ln |u|^2   e^{-\pi |x|^2}dx \Big)^{\frac{1}{2}}+\Big( \frac{1}{\pi}\int |\nabla u|^2 e^{-\pi |x|^2}dx-\int |u|^2 \ln |u|^2   e^{-\pi |x|^2}dx\Big)\\
&\le \pi \Big[ 2(A+\int |x|^4e^{-\pi |x|^2}dx)\Big]^{1/2} \sqrt{\frac{\pi}{\kappa}}\Big[  \frac{1}{\pi}\int |\nabla u|^2 e^{-\pi |x|^2}dx-\int |u|^2 \ln |u|^2   e^{-\pi |x|^2}dx \Big]^{1/2}+\\
&\sqrt{2n} \Big(\frac{1}{\pi}\int |\nabla u|^2 e^{-\pi |x|^2}dx-\int |u|^2 \ln |u|^2   e^{-\pi |x|^2}dx \Big)^{\frac{1}{2}}+\Big( \frac{1}{\pi}\int |\nabla u|^2 e^{-\pi |x|^2}dx-\int |u|^2 \ln |u|^2   e^{-\pi |x|^2}dx\Big).
\end{align*}
%Set
%$$
%f=|u|^2(\frac{x}{\sqrt{2\pi}}).
%Observe 
%$$
% \int |x|^4|u(x)|^2e^{-\pi |x|^2}dx\le A
%$$
%yields that 
%$$
%W_2(fd\gamma, d\gamma) \le \nu_A
%$$
%for $\nu_A>0$:
%
%\begin{align*}
%W_2^2(fd\gamma, d\gamma)&=\int |T(x)-x|^2 f d\gamma\\
%&\le 2 \Big(\int |T(x)|^2 fd\gamma+\int |x|^2 fd\gamma \Big)\\
%&\le 2 \Big(\int |x|^2 d\gamma+\Big[\int |x|^4 fd\gamma \Big]^{\frac{1}{2}}\Big)\\
%&=2 \Big(\int |x|^2 d\gamma+2\pi\Big[\int |x|^4 |u|^2 e^{-\pi |x|^2}dx\Big]^{\frac{1}{2}}\Big)\\
%&\le 2 \Big(\int |x|^2 d\gamma+2\pi A^{\frac{1}{2}}\Big).
%\end{align*}
%Therefore supposing $\delta_*(u)$ is relatively large, $\delta(f)$ is large and 
%$$
%W_2(fd\gamma, d\gamma) \le a_3 \delta^{\frac{1}{2}}(f).
%$$
%This then implies
%$$
%\int f \log f d\gamma \le a_4 \delta(f)
%$$
%therefore enhancing the LSI constant:
%$$
%\int f \log f d\gamma \le \frac{a_4}{2(1+a_4)} \int \frac{|\nabla f|^2}{f}  d\gamma.
%$$
%In particular, there exists $\alpha \in (0,\frac{1}{2})$ such that
%$$
%\int f \log f d\gamma \le \alpha \int \frac{|\nabla f|^2}{f} d\gamma.
%$$
%Hence
%$$
%\delta(f)=\frac{1}{2}\int \frac{|\nabla f|^2}{f} d\gamma-\int f \log f d\gamma \ge (\frac{1}{2}-\alpha)\int \frac{|\nabla f|^2}{f} d\gamma,
%$$

\noindent 3.
Suppose $fd\gamma$ is a probability measure \& let $T=\nabla \Phi$ be the Brenier map between $fd\gamma$ and $d\gamma$. Then it follows from the proof of LSI via optimal transport \cite{MR1894593} that 
$$\int |T(x)-x+\nabla \ln f|^2 fd\gamma \le 2 \delta(f).$$ 
The argument is as follows: define $\theta:= \Phi-\frac{1}{2}|x|^2$. Note that 
$$
f(x) e^{-|x|^2/2}=\text{det}(I+D^2 \theta(x)) e^{-|x+\nabla \theta(x) |^2/2}.
$$
Next, taking the logarithm and then integrating:

\begin{align*}
\int f \ln f d\gamma &\le \int f \Big[\Delta \theta - x \cdot \nabla \theta \Big] d\gamma-\frac{1}{2} \int |\nabla \theta|^2 f d\gamma\\
&=-\int \nabla \theta \cdot \nabla f d\gamma-\frac{1}{2} \int |\nabla \theta|^2 f d\gamma\\
&=-\frac{1}{2}\int \Big|\nabla \theta+\frac{\nabla f}{f} \Big|^2 fd\gamma +  \frac{1}{2} \int \frac{|\nabla f|^2}{f}  d\gamma.\\
\end{align*}
Therefore
 $$
 \frac{1}{2} \int \frac{|\nabla f|^2}{f}  d\gamma-\int f \ln f d\gamma \ge  \frac{1}{2}\int \Big|T-x+\nabla \ln f \Big|^2 fd\gamma.
 $$
Thus, Jensen's inequality yields
$$
2 \delta(f) \ge \int |T(x)-x+\nabla \ln f|^2 fd\gamma \ge \Big(\int |T(x)-x+\nabla \ln f| fd\gamma \Big)^2; 
$$ 
in particular

$$
 \int |T(x)-x+\nabla \ln f| fd\gamma \le \sqrt{2 \delta(f)}.
$$ 
Observe $T$ is the Brenier map, nevertheless supposing one considers the Monge-cost, it yields an upper bound on $W_1$ (in the one-dimensional case, the inequality is an equality):
$$
W_1(f d\gamma,\gamma) \le \int |T(x)-x| fd\gamma.
$$
This then implies
\begin{align*}
W_1(f d\gamma,\gamma) &\le \sqrt{2 \delta(f)}+\int |\nabla f|d\gamma\\
&\le \sqrt{2 \delta(f)}+\Big(\int \frac{|\nabla f|^2}{f} d\gamma \Big)^{\frac{1}{2}}.
\end{align*}
Therefore let $\{f_k d\gamma\}$ be a sequence of probability measures with 
$$
0<\liminf_k W_1(f_k d\gamma,\gamma)
$$
$$
\delta(f_k)\rightarrow 0;
$$
thus
$$
0<\liminf_k \int \frac{|\nabla f_k|^2}{f_k} d\gamma.
$$
Set
$$
f_k=|u_k|^2(\frac{x}{\sqrt{2\pi}}).
$$
One then has
$$
\int f_k e^{-\frac{|x|^{2}}{2}}(2\pi)^{-\frac{n}{2}}dx=\int |u_k|^2 e^{-\pi |x|^2}dx=1
$$
$$
\int \frac{|\nabla f_k|^{2}}{f_k}d\gamma =\frac{2}{\pi}\int |\nabla u_k|^2 e^{-\pi |x|^2}dx.
$$
Therefore

$$
0<\liminf_k \int |\nabla u_k|^2 e^{-\pi |x|^2}dx
$$
and this directly implies

$$
||u_k||_{H^1(e^{-\pi |x|^2}dx)} \nrightarrow 0
$$
$$
\delta(u_k) \rightarrow 0.
$$
Furthermore, 
there is a sequence of probability measures such that
$$
W_1(f_k d\gamma,d\gamma) \rightarrow \infty, 
$$
see \cite{MR4455233}, hence the argument above finishes the proof.

\end{proof}

\begin{remark}
Observe that $H^1$ convergence is equivalent to $W_2$ convergence (when $\delta(u_k) \rightarrow 0$). The first $W_2$ bound was obtained in \cite{MR3271181}. 
\end{remark}

\begin{proof}[Proof of Theorem \ref{@5y}]
If $a>0$, define
$$
u_a(x)=(2a+1)^{\frac{n}{4}}e^{-a\pi|x|^2}.
$$
Now 
$$
||u_a||_{L^2(e^{-\pi |x|^2}dx)}=1,
$$
$$
\int x |u_a|^2e^{-\pi |x|^2}dx=0,
$$

\begin{align*}
&\frac{\Big(\int |\nabla u|^2 e^{-\pi |x|^2}dx-\pi \int |u|^2 \ln |u|^2   e^{-\pi |x|^2}dx\Big)}{\int |u-1|^2 e^{-\pi |x|^2}dx}\\
&=\pi\Big(\frac{\frac{2na^2}{2a+1}-\frac{n}{2}\ln(2a+1)+\frac{na}{2a+1}}{2-2\frac{(2a+1)^{\frac{n}{4}}}{(a+1)^{\frac{n}{2}}}}\Big)\\
&=\frac{\pi}{2} \frac{2na^2-\frac{n}{2}(2a+1)\ln(2a+1)+na}{2a+1-\frac{(2a+1)^{\frac{n+4}{4}}}{(a+1)^{\frac{n}{2}}}},
\end{align*}

\begin{align*}
&\lim_{a \rightarrow 0^+} \frac{\Big(\int |\nabla u|^2 e^{-\pi |x|^2}dx-\pi \int |u|^2 \ln |u|^2   e^{-\pi |x|^2}dx\Big)}{\int |u-1|^2 e^{-\pi |x|^2}dx}\\
&\lim_{a \rightarrow 0^+} \frac{\pi}{2} \frac{4na-n\ln(2a+1)}{2-\Big (\frac{n+4}{2}\frac{(2a+1)^{\frac{n}{4}}}{(a+1)^{\frac{n}{2}}} - \frac{n}{2}\frac{(2a+1)^{\frac{n+4}{4}}}{(a+1)^{\frac{n}{2}+1}}\Big)}\\
&\lim_{a \rightarrow 0^+} -\pi\frac{2n-n\frac{1}{2a+1}}{\Big ( \Big(  \frac{(a+1)^{\frac{n}{2}}\frac{n}{2}\frac{n+4}{2} (2a+1)^{\frac{n-4}{4}}-(2a+1)^{\frac{n}{4}}\frac{n+4}{2}\frac{n}{2}(a+1)^{\frac{n-2}{2}}}{(a+1)^{n}}\Big)
-\Big (\frac{(a+1)^{\frac{n}{2}+1}\frac{n}{2}\frac{n+4}{2}(2a+1)^{\frac{n}{4}} -(2a+1)^{\frac{n+4}{4}}\frac{n}{2}(a+1)^{\frac{n}{2}}(\frac{n}{2}+1)}{(a+1)^{n+2}}\Big)\Big)}\\
&=2\pi.
\end{align*}
In particular, assume via contradiction 
$$
\omega \Big(\int |\nabla u|^2 e^{-\pi |x|^2}dx-\pi \int |u|^2 \ln |u|^2   e^{-\pi |x|^2}dx\Big) \ge \kappa  \int |u-1|^2 e^{-\pi |x|^2}dx
$$
when $u$ is normalized and centered, with
$$
\omega(a)=o(a).
$$
Hence
\begin{align*}
&\kappa \le \frac{\omega \Big(\int |\nabla u_a|^2 e^{-\pi |x|^2}dx-\pi \int |u_a|^2 \ln |u_a|^2   e^{-\pi |x|^2}dx\Big)}{\int |u_a-1|^2 e^{-\pi |x|^2}dx} \\
&=\frac{\omega \Big(\int |\nabla u_a|^2 e^{-\pi |x|^2}dx-\pi \int |u_a|^2 \ln |u_a|^2   e^{-\pi |x|^2}dx\Big)}{\int |\nabla u_a|^2 e^{-\pi |x|^2}dx-\pi \int |u_a|^2 \ln |u_a|^2   e^{-\pi |x|^2}dx} \frac{\int |\nabla u_a|^2 e^{-\pi |x|^2}dx-\pi \int |u_a|^2 \ln |u_a|^2   e^{-\pi |x|^2}dx}{\int |u_a-1|^2 e^{-\pi |x|^2}dx}\\
& \rightarrow 0 
\end{align*}
as $a \rightarrow 0^+$; this therefore contradicts $\kappa>0$. 
\end{proof}

{\bf Acknowledgment}
I worked on some of the content during the minisymposium “Qualitative properties of solutions to Elliptic and Parabolic PDEs and related
topics"  in Gaeta, Italy (May 2019). The excellent academic environment is acknowledged. 
\bibliography{References}
\bibliographystyle{amsplain}

\end{document}